\newtheorem{theorem}{Theorem}[section]
\newtheorem{proposition}[theorem]{Proposition}
\newtheorem{corollary}[theorem]{Corollary}
\theoremstyle{definition}
\newtheorem{definition}[theorem]{Definition}
\newtheorem{example}[theorem]{Example}
\newtheorem{remark}[theorem]{Remark}
\numberwithin{equation}{section}
\begin{document}

\title[Coproduct Cancellation on \textbf{Act}-$S$]{Coproduct Cancellation on \textbf{Act}-$S$}

\author[Kamal Ahmadi and Ali Madanshekaf ]{Kamal Ahmadi$^1$, Ali Madanshekaf $^2$}

\address{$^{1}$ Department of Mathematics, Semnan  University, P. O. Box 35131-19111, Semnan, Iran.}

\email{kamal.ahmadi.math@gmail.com}

\address{$^{2}$ Department of Mathematics, Semnan  University, P. O. Box 35131-19111, Semnan, Iran.}

\email{amadanshekaf@semnan.ac.ir}

\keywords{monoid; acts; indecomposable $S$-act; cancellation; internal cancellation}
\subjclass[2000]{Primary: 20M30, 20M50; Secondary: 08A60, 08B25}
\begin{abstract}
The themes of cancellation, internal
cancellation, substitution have led to a lot of interesting
research in the theory of modules over commutative and
noncommutative rings. In this paper, we introduce and study cancellation problem in the theory of acts over monoids. We show that if $A$ is an $S$-act and $A={\dot\bigcup_{i\in I}}A_i$ is the unique
decomposition of $A$ into indecomposable subacts $A_i, i\in I$ such that the set
$P=\{{\rm Card} [i] \mid i\in I\}$ is finite, then $A$ is
cancellable if and only if the equivalence class $[i]=\{j\in I \mid  A_i\cong A_j\}$ is finite, for every $i\in I$.
Likewise, we prove that  every $S$-act is cancellable if and only if it  is
internally cancellable. Thus, the concepts cancellation and internal cancellation coincide here.
\end{abstract}
\maketitle
\section{Introduction and Preliminaries}
Jonsson and Tarski  were considered cancellation problem
 initiatory in 1947 (see~\cite{tar}). In the study of any algebraic
system in which there is a notion of a direct sum, the theme of
cancellation arises very naturally: if $A\oplus B\cong A\oplus C$ in
the given system, can we conclude that $B\cong C$? The answer is,
perhaps not surprisingly, sometimes ``yes'' and sometimes ``no'': it
all depends on the algebraic system, and it depends heavily on the
choice of $A$ as well.

Importance of cancellation problem is obvious, since Serre's
famous conjecture on the freeness of f.g. projective modules over
a polynomial ring $R=K[x_1,\cdots ,x_n]$ (for a field $K$) boiled down
to a statement about the cancellability of R (see \cite{ca} and
\cite{lam3}).

Starting with a simple example, we all know that, by the Fundamental
Theorem of Abelian Groups, the category of finitely generated
abelian groups satisfies cancellation. If $A$ is a finitely generated
abelian group, then for any abelian groups $B$ and $C$, $A\oplus
B\cong A\oplus C$ still implies $B\cong C$. There exists many
torsionfree abelian groups of rank 1 that are not cancellable in the
category of torsionfree abelian groups of finite rank, according to
\cite{jon}.

Now let $R$ be an associative ring with an identity. We say that
$R$ has stable range one provided that $aR+bR=R$ with $a,b\in R$
implies that there exists some $y\in R$ such that $a+by$ is unit.
If ${\rm End}_R(A)$ has stable range one, then $A\oplus B\cong A\oplus
C$ implies that $B\cong C$ for any right $R$-modules $B,C$ (see
 \cite[Theorem 2]{evans}). Since every local ring has stable
range one, therefore every strongly indecomposable module $A$
(that is, ${\rm End}(A)$ is local) is cancellable and every simple
module is cancellable by ~\cite[Lemma 4.13.3]{ander}.

On the other hand, in categories of modules over rings there are several variations on
the notion of cancellation. For instance, for given module $A$,
$A=K\oplus N=K'\oplus N'$ with $N\cong N'$, does it follow that
$K\cong K'?$ If the answer is always ``yes'', $A$ is said to satisfy
internal cancellation (or $A$ is internally cancellable). Another
variations are the ``Substitution'' and ``Dedekind-finite'' properties (see~\cite{lam1} for the definitions).  These properties are easily seen to be
related as follows:
\begin{center}
 Substitution
$\Rightarrow$Cancellation $\Rightarrow$ Internal cancellation
$\Rightarrow$ Dedekind-finite.
\end{center}
We encourage the reader to consult~\cite{ander, lam2, lam1} about cancellation problem on the category of modules on arbitrary rings.

  Let $S$ be a monoid with identity $1$. Recall that
a (right) $S$-act $A$ is a non-empty set equipped with a map $\lambda:
A\times S\to A$ called its action, such that, denoting $\lambda(a,
s)$ by $as$, we have $a1 = a$ and $a(st)=(as)t$, for all $a\in A$,
and $s, t\in S$. The category of all $S$-acts, with
action-preserving ($S$-act) maps ($f : A\to B$ with $f(as) =
f(a)s$, for $s\in S$, $a\in A$)  is denoted by \textbf{Act}-$S$.
Clearly $S$ itself is an $S$-act with its operation as the action.
Throughout this paper, all $S$-acts will be right $S$-act.

Recall that the category \textbf{Act}-$S$ has coproducts of any non-empty families
of $S$-acts. More Precisely, if   $I$ is a non-empty set and  $X_i\in\textrm{\bf
Act}-S, i\in I$ then by~\cite[Proposition 2.1.8]{act} the coproduct of  $\{ X_i : i\in I\}$ is their disjoint union $\dot{\bigcup}_{i\in
I}X_i.$
Likewise, we recall that an $S$-act $A$ decomposable if there
exist two subacts $B,C\subseteq A$ such
that $A=B\cup C$ and $B\cap C=\emptyset$. In this case $A=B\cup C$
is called a decomposition of $A$. Otherwise $A$ is called
indecomposable. By~\cite[theorem 1.5.10]{act}, every $S$-act $A$
has a unique decomposition into indecomposable subacts. We will
use this unique decomposition frequently. For more information
about S-acts we encourage the reader to see \cite{act}.

In this paper, we investigate the cancellation problem in the
category of $S$-acts.
\section{cancellation on \textbf{Act}-$S$}
 In this paper, we give some
 results for cancellation problem in \textbf{Act}-$S$.
 We start with a definition:
\begin{definition}
An $S$-act $A$ satisfies cancellation, if for any $B, C\in
\textbf{Act}-S$ that $A\dot{\cup}B\cong A\dot{\cup}C$ implies
$B\cong C$. If $A$ satisfies cancellation we call $A$ is cancellable.
\end{definition}
There exist examples that cancellation in \textbf{Act}-$S$ always
does not satisfy.
\begin{example}\label{ex}Let $S$ be a monoid.\\
(i) Given two non-isomorphic $S$-acts $B$ and $C$, let
\begin{equation}
    A:=C\dot{\cup}B\dot{\cup} C\dot{\cup}\cdots
\end{equation}
 then
\begin{equation}
 A\dot{\cup}
B\cong A\dot{\cup}C,
\end{equation}
and we can not cancel $A$.\\
(ii)  Take  an indecomposable $S$-act $A$  and an arbitrary infinite
set $I$. Then $B=\dot{\cup}_{i\in I}A_i$ in which $A_i=A$ for any
$i\in I$ is not cancellable, because
\begin{equation}
   B\dot{\cup} A\cong B\dot{\cup}
(A\dot{\cup} A)
\end{equation}
but
\begin{equation}
   A\dot{\cup} A\ncong A.
\end{equation}
\end{example}

\begin{theorem}\label{t:c}
Let $A$ and $B$ are $S$-acts. Then $S$-act $A\dot{\cup} B$ is
cancellable if and only if $A$ and $B$ themselves are.
\begin{proof}
Let $A\dot{\cup} B$ is cancellable and $A\dot{\cup} C\cong
A\dot{\cup} D$ in which $C, D$ are arbitrary $S$-acts. We have
$B\dot{\cup}(A\dot{\cup} C)\cong B\dot{\cup}(A\dot{\cup} D)$
then $(A\dot{\cup} B)\dot{\cup} C\cong (A\dot{\cup}
B)\dot{\cup} D$. Since $A\dot{\cup} B$ is cancellable therefore
$C\cong D$. Hence $A$ is cancellable. In a similar vein, $B$ is
cancellable.\\
Conversely, let $A, B$ are cancellable and $(A\dot{\cup}
B)\dot{\cup} C\cong (A\dot{\cup} B)\dot{\cup} D$ in which $C, D$ are
arbitrary $S$-acts. We have $A\dot{\cup}(B\dot{\cup} C)\cong
A\dot{\cup}(B\dot{\cup} D)$ then $B\dot{\cup} C\cong B\dot{\cup} D$,
because $A$ is  cancellable. As $B\dot{\cup} C\cong B\dot{\cup} D$
and $B$ is cancellable we have $C\cong D$. Therefore $A\dot{\cup} B$
is cancellable.
\end{proof}
\end{theorem}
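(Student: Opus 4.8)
The plan is to reduce the whole statement to two structural facts about the coproduct $\dot{\cup}$ on \textbf{Act}-$S$. First, since by \cite[Proposition 2.1.8]{act} the coproduct is concretely the disjoint union, it is associative and commutative up to canonical isomorphism, so that $(A\dot{\cup}B)\dot{\cup}C$, $A\dot{\cup}(B\dot{\cup}C)$ and $B\dot{\cup}(A\dot{\cup}C)$ may be used interchangeably. Second, isomorphism is a congruence for $\dot{\cup}$: if $X\cong Y$ then $Z\dot{\cup}X\cong Z\dot{\cup}Y$ for every $S$-act $Z$; I would justify this in one line by extending an isomorphism $X\to Y$ by the identity map on $Z$ (equivalently, by invoking functoriality of the coproduct). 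With these two observations in hand, the rest is bookkeeping.

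For the forward implication I would assume $A\dot{\cup}B$ is cancellable and deduce first that $A$ is cancellable. Given $S$-acts $C,D$ with $A\dot{\cup}C\cong A\dot{\cup}D$, I would coproduct both sides with $B$ (using the congruence property) to obtain $B\dot{\cup}(A\dot{\cup}C)\cong B\dot{\cup}(A\dot{\cup}D)$, reassociate and recommute to $(A\dot{\cup}B)\dot{\cup}C\cong(A\dot{\cup}B)\dot{\cup}D$, and then cancel $A\dot{\cup}B$ to conclude $C\cong D$. The corresponding statement for $B$ follows by the same computation after using $A\dot{\cup}B\cong B\dot{\cup}A$.

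For the converse I would assume both $A$ and $B$ are cancellable and take $S$-acts $C,D$ with $(A\dot{\cup}B)\dot{\cup}C\cong(A\dot{\cup}B)\dot{\cup}D$. Reassociating gives $A\dot{\cup}(B\dot{\cup}C)\cong A\dot{\cup}(B\dot{\cup}D)$; cancelling $A$ yields $B\dot{\cup}C\cong B\dot{\cup}D$; cancelling $B$ yields $C\cong D$; hence $A\dot{\cup}B$ is cancellable.

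I do not anticipate a genuine obstacle. The only point deserving an explicit line is the congruence property of $\dot{\cup}$, together with its associativity and commutativity up to isomorphism, and both are immediate from the disjoint-union description of the coproduct in \textbf{Act}-$S$; everything else is a two-line manipulation in each direction.
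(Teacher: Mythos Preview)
Your proof is correct and follows essentially the same approach as the paper: coproduct with the missing summand, reassociate, and cancel in both directions. The only difference is that you spell out explicitly the associativity, commutativity, and congruence properties of $\dot{\cup}$ that the paper uses tacitly.
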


\begin{proposition}\label{main}
Every indecomposable $S$-act is cancellable.
\end{proposition}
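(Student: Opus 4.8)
The plan is to reduce the statement to the uniqueness of the decomposition into indecomposable subacts (\cite[Theorem~1.5.10]{act}) together with one elementary fact about cardinal arithmetic. First I would record a convenient reformulation of isomorphism: if $X=\dot{\bigcup}_{i\in I}X_i$ and $Y=\dot{\bigcup}_{j\in J}Y_j$ are the unique decompositions of two $S$-acts into indecomposable subacts, then $X\cong Y$ if and only if there is a bijection $\sigma\colon I\to J$ with $X_i\cong Y_{\sigma(i)}$ for all $i$, i.e.\ if and only if $X$ and $Y$ give rise to the same multiset of isomorphism classes of indecomposable components. The ``if'' direction is obtained by pasting the isomorphisms $X_i\to Y_{\sigma(i)}$ together, using \cite[Proposition~2.1.8]{act}; the ``only if'' direction holds because an isomorphism $X\to Y$ carries the family $\{X_i\}_{i\in I}$ to a decomposition of $Y$ into indecomposable subacts, which by uniqueness must coincide with $\{Y_j\}_{j\in J}$.

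Next I would note that, for an indecomposable $S$-act $A$ and an arbitrary $S$-act $B=\dot{\bigcup}_{j\in J}B_j$, the unique decomposition of $A\dot{\cup}B$ into indecomposable subacts is $\{A\}\cup\{B_j : j\in J\}$: since $A$ and $B$ are disjoint subacts of $A\dot{\cup}B$, no element of $A$ lies in the same indecomposable component as an element of $B$, so the components of $A\dot{\cup}B$ are exactly the components of $A$---just $A$ itself, by hypothesis---together with the components of $B$.

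Now assume $A\dot{\cup}B\cong A\dot{\cup}C$, and write $B=\dot{\bigcup}_{j\in J}B_j$ and $C=\dot{\bigcup}_{k\in K}C_k$ via their unique decompositions. Combining the last two paragraphs, the multiset of isomorphism classes determined by $\{A\}\cup\{B_j\}$ equals that determined by $\{A\}\cup\{C_k\}$. Writing $[A]$ for the isomorphism class of $A$ and $m_B$, $m_C$ for the multiplicity functions of the multisets of isomorphism classes of $\{B_j\}_{j\in J}$ and $\{C_k\}_{k\in K}$, this equality reads $m_B(x)=m_C(x)$ for every class $x\neq[A]$ and $m_B([A])+1=m_C([A])+1$ as cardinals. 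Since $\kappa+1=\lambda+1$ forces $\kappa=\lambda$ for any cardinals $\kappa,\lambda$ (the only case needing comment being $\kappa,\lambda$ infinite, where $\kappa=\kappa+1=\lambda+1=\lambda$), we get $m_B([A])=m_C([A])$ as well, so $B$ and $C$ have the same multiset of isomorphism classes of indecomposable components. By the reformulation of the first paragraph, $B\cong C$, and therefore $A$ is cancellable.

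The one point I expect to require care is the final cardinal-cancellation step: what makes it work is that the indecomposability of $A$ limits us to deleting a single copy of $A$, and a single element can always be cancelled from a multiset, even an infinite one. This is in sharp contrast with Example~\ref{ex}, where cancellation fails precisely because one tries to absorb a whole infinite family of copies. The remaining verifications are routine bookkeeping with the unique decomposition, so I would keep them brief.
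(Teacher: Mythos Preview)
Your argument is correct. Both your proof and the paper's rest on the uniqueness of the decomposition into indecomposable subacts (\cite[Theorem~1.5.10]{act}), but the packaging differs. The paper fixes an explicit isomorphism $f\colon A\dot\cup B\to A\dot\cup C$, observes that $f$ must carry the indecomposable component $A$ either to $A$ itself or to some $C_{j'}$, and then handles the two cases directly: in the first case $f$ restricts to $B\cong C$, and in the second one swaps $A\cong C_{j'}$ back in for $B_{i'}\cong A$ to manufacture the desired isomorphism. You instead abstract away from any particular $f$, recasting ``$X\cong Y$'' as ``same multiset of isomorphism classes of indecomposable components'', so that the statement becomes the cardinal identity $\kappa+1=\lambda+1\Rightarrow\kappa=\lambda$. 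Your route is a bit more conceptual and makes transparent why indecomposability is exactly the hypothesis needed (one is only ever cancelling a single copy), while the paper's route is more concrete and avoids any appeal to cardinal arithmetic. Neither approach requires anything the other does not already implicitly use; they are two presentations of the same idea.
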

\begin{proof}
Let $A$ be an indecomposable $S$-act and $B$ and $C$ are two arbitrary $S$-acts in which
\begin{equation}
A\dot{\cup}B\cong A\dot{\cup}C.
\end{equation}
We will show that $B\cong C.$ We may assume without loss of
generality that $A\cap B=A\cap C=\emptyset$. Let $f:A\cup
B\longrightarrow A\cup C$ be an $S$-isomorphism. By \cite[Theorem
1.5.10]{act}
we can write $B=\displaystyle\dot{\bigcup_{i\in I}}B_{i}$ and
$C=\displaystyle\dot{\bigcup_{j\in J}}C_{j}$ where
 all  $B_i$'s and $C_j$'s are indecomposable and
$B_i\cap B_{i'}=\emptyset, C_j\cap C_{j'}=\emptyset$ for any $i,
i'\in I$ and $j, j'\in J$. Since $f$ is an isomorphism we get
\begin{equation}
f(A\dot{\cup}(\dot{\bigcup_{i\in
I}}B_{i}))=f(A)\dot{\cup}(\dot{\bigcup_{i\in
I}}f(B_{i}))=A\dot{\cup}(\dot{\bigcup_{j\in J}}C_{j}).
\end{equation}
Here,  by~\cite[Lemma 1.5.36]{act} the subacts $f(A)$ and $f(B_i)$
are indecomposable for any $i\in I$. Furthermore, again
by~\cite[Theorem 1.5.10]{act} this decomposition is unique. Thus,
\begin{equation}
f(A)=A \textrm{ or } f(A)=C_{j'} \textrm{ for  some }  j'\in J.
\end{equation}
 If $f(A)=A$ then for
every $i\in I$ there exists a unique element $j\in J$ such that
 $f(B_i)=C_j$. Therefore
\begin{equation}
B\cong f(B)=\dot{\bigcup_{i\in I}}f(B_{i})=\dot{\bigcup_{j\in
J}}C_j=C,
\end{equation}
because $f$ is an isomorphism. If $f(A)=C_{j'}$ for some $j'\in J$
then $A=f(B_{i'})$ for some ${i'}\in I$. Therefore for every
 $i\neq{i'}$ there exists a unique element $j\neq{j'}$ such that
$f(B_i)=C_j$ and this implies that
\begin{equation}\label{union of f(B)= union of C}
\dot{\bigcup_{i\in I\setminus \{i'\}}}f(B_{i})=\dot{\bigcup_{j\in
J\setminus\{j'\}}}C_j.
\end{equation}
Next from  (\ref{union of f(B)= union of C}) we have
\[\begin{array}{ccl}
  B & \cong & f(B)\\
   &=&\displaystyle f(B_{i'})\dot{\cup}(\dot{\bigcup_{i\in
I\backslash\{{i'}\}}}f(B_{i}))\\
  &=& \displaystyle A\dot{\cup}(\dot{\bigcup_{j\in
J\backslash\{{j'}\}}}C_j) \\
  & \cong  &\displaystyle  f(A)\dot{\cup}(\dot{\bigcup_{j\in J\backslash\{{j'}\}}}C_j)\\
  &=&\displaystyle C_{j'}\dot{\cup}(\dot{\bigcup_{j\in
J\backslash\{{j'}\}}}C_j)\\
&=&C,
\end{array}\]
i.e, $B \cong C$, as required.
\end{proof}
\begin{definition}
Let $A=\dot\bigcup_{i\in I}A_i$ be  the unique
decomposition of $A$ into indecomposable subacts  $A_i, i\in I.$
We call  $A$ finitely decomposable if $1\leq |I|<\infty$. Otherwise
 $A$ is called infinitely decomposable.
\end{definition}
\begin{proposition}\label{fi}
Let $S$ be a monoid. Then\\
(1) Every cyclic $S$-act is cancellable.\\
(2) Every simple $S$-act is cancellable.\\
(3) Every monoid $S$ is cancellable.\\
(4) Every finitely decomposable $S$-act is cancellable.\\
(5) Every finitely generated $S$-act is cancellable.
\end{proposition}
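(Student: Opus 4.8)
The plan is to derive all five parts from the two facts already in hand: Proposition~\ref{main} (an indecomposable $S$-act is cancellable) and Theorem~\ref{t:c} (a finite coproduct is cancellable iff each summand is). The only new ingredient needed is the observation that \emph{every cyclic $S$-act is indecomposable}; granting this, parts (1)--(3) are immediate, part (4) is a short induction, and part (5) reduces to (4).

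First I would record the lemma: if $A=aS$ is cyclic and $A=B\,\dot{\cup}\,C$ with $B,C$ subacts, then (say) $a\in B$, and since $B$ is a subact we get $A=aS\subseteq B$, forcing $C=\emptyset$; hence $A$ is indecomposable. Combined with Proposition~\ref{main}, this gives (1). Part (3) follows at once, since $S$ is the cyclic $S$-act generated by $1$ (the subact $1S$ equals $S$). For (2): a simple $S$-act has no proper nonempty subacts, so the subact generated by any one of its elements must be the whole act; thus a simple act is cyclic, and (1) applies (alternatively, a simple act admits no nontrivial decomposition, so Proposition~\ref{main} applies directly).

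For (4), write the unique indecomposable decomposition $A=A_{1}\,\dot{\cup}\,\cdots\,\dot{\cup}\,A_{n}$; each $A_{i}$ is cancellable by Proposition~\ref{main}, and an induction on $n$ using Theorem~\ref{t:c} (applied to $A=(A_{1}\,\dot{\cup}\,\cdots\,\dot{\cup}\,A_{n-1})\,\dot{\cup}\,A_{n}$) shows $A$ is cancellable. For (5), suppose $A$ is generated by $a_{1},\dots,a_{m}$ and let $A=\dot{\bigcup}_{i\in I}A_{i}$ be its unique decomposition into indecomposables. Each generator $a_{k}$ lies in exactly one component $A_{i_{k}}$, and since $A_{i_{k}}$ is a subact we have $a_{k}S\subseteq A_{i_{k}}$; hence $A=\bigcup_{k}a_{k}S\subseteq\bigcup_{k}A_{i_{k}}\subseteq A$. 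Because the $A_{i}$ are pairwise disjoint and nonempty and cover $A$, this forces $I=\{i_{1},\dots,i_{m}\}$, so $A$ is finitely decomposable and (4) applies.

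I do not anticipate a real obstacle. The only point that needs a moment's care is the last one: that a finitely generated $S$-act has only finitely many indecomposable components. This rests on the uniqueness of the indecomposable decomposition (\cite[Theorem~1.5.10]{act}) together with the elementary fact that the cyclic subact generated by a single element is contained in a single component of that decomposition.
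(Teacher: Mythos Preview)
Your proposal is correct and follows essentially the same route as the paper: the paper also derives (1)--(3) from the fact that cyclic acts are indecomposable (citing \cite[Proposition~1.5.8]{act} rather than proving it) together with Proposition~\ref{main}, and then appeals to Theorem~\ref{t:c} and Proposition~\ref{main} for (4) and (5). Your version simply unpacks the details---the indecomposability of cyclic acts, the induction for (4), and the reduction of (5) to (4)---that the paper leaves implicit.
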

\begin{proof}
Since every cyclic $S$-act is indecomposable by~\cite[Proposition
1.5.8]{act}, (1)-(3) are clear by Proposition~\ref{main}. Statements (4)
and (5) are followed  by Theorem~\ref{t:c} and Proposition~\ref{main}.
\end{proof}
\begin{corollary}\label{fr}
Let $A$ be a free $S$-act with basis $X$. Then $A$ is cancellable if
and only if the basis $X$ is finite.
\begin{proof}
 Since $A$ is free $S$-act by \cite[Theorem 1.5.13]{act}, $A\cong\dot{\cup}_{i\in I}S_i$ where $S_i\cong S$ for any
$i\in I$ and $|I|=|X|$. On the other hand, In Example~\ref{ex} we
have seen that  if $|X|=\infty$ then $A$ is not cancellable.
Therefore by Proposition~\ref{fi} we get the result.
\end{proof}
\end{corollary}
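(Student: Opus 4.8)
The plan is to reduce the assertion to the coproduct picture already in place and read off the two implications from the results above. First I would use \cite[Theorem 1.5.13]{act} to write $A\cong\dot{\bigcup}_{i\in I}S_i$ with $S_i\cong S$ for every $i\in I$ and $|I|=|X|$. Since $S$ is a cyclic $S$-act it is indecomposable by \cite[Proposition 1.5.8]{act}, so this coproduct is exactly the unique decomposition of $A$ into indecomposable subacts; in particular the number of indecomposable components of $A$ equals $|X|$, and $A$ is finitely decomposable precisely when $X$ is finite.

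For the implication ``$X$ finite $\Rightarrow A$ cancellable'': if $|X|<\infty$ then $A$ is finitely decomposable, hence cancellable by Proposition~\ref{fi}. One could equally note that $A$ is then finitely generated, again covered by Proposition~\ref{fi}, or argue directly that each $S_i\cong S$ is cancellable and that Theorem~\ref{t:c} propagates cancellability across a finite coproduct by induction on $|I|$.

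For the converse, suppose $X$, hence $I$, is infinite, and invoke Example~\ref{ex}(ii) with the indecomposable act taken to be $S$ itself: this yields $A\dot{\cup}S\cong A\dot{\cup}(S\dot{\cup}S)$ while $S\dot{\cup}S\ncong S$, the non-isomorphism holding because $S$ is indecomposable whereas $S\dot{\cup}S$ is decomposable, so an isomorphism would contradict the uniqueness of decomposition \cite[Theorem 1.5.10]{act}. Hence $A$ is not cancellable, and combining the two directions gives the corollary. I do not anticipate a genuine obstacle, since this merely packages the earlier machinery; the only points needing a moment's care are that $S$ is indecomposable as an $S$-act (so that Example~\ref{ex}(ii) and \cite[Theorem 1.5.10]{act} apply with $A=S$) and the small verification that $S\dot{\cup}S\ncong S$.
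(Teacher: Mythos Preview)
Your proposal is correct and follows essentially the same route as the paper: decompose the free act via \cite[Theorem 1.5.13]{act}, use Proposition~\ref{fi} for the finite case, and invoke Example~\ref{ex}(ii) with $S$ in the infinite case. You supply a little more justification (indecomposability of $S$ and the non-isomorphism $S\dot{\cup}S\ncong S$), but the argument is the same.
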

 We have shown that every finitely decomposable $S$-act is cancellable.
The converse is not true in general as the following theorem shows.
\begin{theorem}\label{inf}
Let $A$ be an infinitely decomposable $S$-act such that
$A=\displaystyle\dot{\bigcup_{i\in I}}A_i$ is the unique
decomposition of $A$ into indecomposable subacts and $A_i\ncong A_j$
for any pair of distinct elements $i,j\in I$. Then $A$ is cancellable.
\end{theorem}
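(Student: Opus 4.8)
The plan is to reduce the statement to a counting argument over the isomorphism types of indecomposable subacts, using the uniqueness of the decomposition into indecomposables (Theorem 1.5.10 of~\cite{act}) at every step. Assume $A\dot{\cup}B\cong A\dot{\cup}C$ for arbitrary $S$-acts $B,C$; we must deduce $B\cong C$. Write the unique decompositions $B=\dot{\bigcup}_{k\in K}B_k$ and $C=\dot{\bigcup}_{l\in L}C_l$ into indecomposable subacts. For an indecomposable $S$-act $D$ and any $S$-act $X$ with unique indecomposable decomposition $X=\dot{\bigcup}_{t\in T}X_t$, let $m_X(D)$ be the cardinal number $|\{t\in T\mid X_t\cong D\}|$; this is independent of the chosen decomposition by Theorem 1.5.10 of~\cite{act}. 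I would first record two elementary facts. \emph{Fact 1:} for $S$-acts $X,Y$ one has $X\cong Y$ if and only if $m_X(D)=m_Y(D)$ for every indecomposable $S$-act $D$. For the ``only if'' direction, an isomorphism $f\colon X\to Y$ carries the canonical decomposition $\dot{\bigcup}_t X_t$ to $\dot{\bigcup}_t f(X_t)$ with each $f(X_t)$ indecomposable by Lemma 1.5.36 of~\cite{act}, so uniqueness of the decomposition of $Y$ forces a type-preserving bijection of components; for the ``if'' direction, one chooses for each type $D$ a bijection between the $D$-components of $X$ and those of $Y$ and an isomorphism on each matched pair, then takes the disjoint union of all these maps. \emph{Fact 2:} since the disjoint union of the canonical decompositions of $X$ and $Y$ is the canonical decomposition of $X\dot{\cup}Y$, we have $m_{X\dot{\cup}Y}(D)=m_X(D)+m_Y(D)$ for every indecomposable $D$.

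Granting these, the argument runs as follows. Since the $A_i$ are pairwise non-isomorphic, $m_A(D)\le 1$, and in particular $m_A(D)$ is finite, for every indecomposable $D$. From $A\dot{\cup}B\cong A\dot{\cup}C$ and Fact 1, $m_{A\dot{\cup}B}(D)=m_{A\dot{\cup}C}(D)$ for every $D$, hence by Fact 2 the cardinal equation $m_A(D)+m_B(D)=m_A(D)+m_C(D)$ holds with $m_A(D)$ finite. At this point I would invoke the cancellation law for finite cardinals under cardinal addition: if $n+\kappa=n+\lambda$ with $n<\omega$, then $\kappa=\lambda$ (if $\kappa$ is infinite then $n+\kappa=\kappa$ and $n+\lambda$ is infinite, so $\lambda$ is infinite and $n+\lambda=\lambda$, giving $\kappa=\lambda$; if $\kappa$ is finite the equation is one of natural numbers). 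Applying this with $n=m_A(D)$ yields $m_B(D)=m_C(D)$ for every indecomposable $D$, and Fact 1 then gives $B\cong C$, as required.

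The bookkeeping with the canonical decomposition — that isomorphisms and disjoint unions interact with it in the expected way — is routine given Theorem 1.5.10 and Lemma 1.5.36 of~\cite{act}. The only point that needs a moment's care is the cardinal-arithmetic cancellation step, and this is precisely where the hypothesis that the $A_i$ are pairwise non-isomorphic is used: it guarantees that the cardinal being cancelled, $m_A(D)$, is finite (indeed at most $1$). This is also exactly where the argument would fail without that hypothesis, since in Example~\ref{ex}(ii) the relevant multiplicity is infinite and infinite cardinals are not cancellable.
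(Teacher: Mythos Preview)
Your argument is correct, and it takes a genuinely different route from the paper's. The paper works concretely with a fixed isomorphism $f\colon A\dot\cup B\to A\dot\cup C$: using that the $A_i$ are pairwise non-isomorphic, it observes that each $f(A_i)$ equals either $A_i$ itself or some $C_j$, partitions the index sets $I,J,K$ accordingly, and then reassembles an explicit chain of isomorphisms showing $C\cong f(B)\cong B$. Your approach abstracts away the particular isomorphism entirely, replacing it by the invariant $D\mapsto m_X(D)$ and reducing the problem to the cancellation of a finite cardinal summand. What this buys you is modularity and immediate generality: the only property of $A$ you use is that $m_A(D)$ is \emph{finite} for every indecomposable $D$, so your proof in fact establishes that any $A$ whose equivalence classes $[i]$ are all finite is cancellable, which is the sufficiency direction of Theorem~\ref{main1} without the auxiliary hypothesis that $P=\{\mathrm{Card}[i]:i\in I\}$ be finite. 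The paper's hands-on argument, by contrast, does not visibly extend beyond the pairwise non-isomorphic case; this is why the paper proves Theorem~\ref{main1} separately, by cutting $A$ into finitely many pieces to which Theorem~\ref{inf} applies and then invoking Theorem~\ref{t:c}. The trade-off is that the paper's proof is slightly more constructive (it exhibits the isomorphism $B\cong C$ in terms of $f$), whereas your Fact~1 relies on choosing bijections and componentwise isomorphisms.
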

\begin{proof}
Assume that $A\dot{\cup}B\cong A\dot{\cup}C$ where $B$ and $C$ are
$S$-act. We must show that $B\cong C.$ We may assume without loss of generality that $A\cap
B=A\cap C=\emptyset$. Let $f:A\cup B\longrightarrow A\cup C$ be an
isomorphism and $B=\displaystyle\dot{\bigcup_{k\in K}}B_k,$
$C=\displaystyle\dot{\bigcup_{j\in J}}C_j$ are unique decompositions
of $B,C$ into their indecomposable subacts, respectively. We have $f(A\cup
B)=A\cup C$ then
\begin{equation}\label{expanded}
\displaystyle\dot{\bigcup_{i\in I}}f(A_{i})\cup(\dot{\bigcup_{k\in
K}}f(B_{k}))=(\dot{\bigcup_{i\in I}}A_{i})\cup(\dot{\bigcup_{j\in
J}}C_{j}).
\end{equation}
Note that by~\cite[Lemma 1.5.36]{act}, in (\ref{expanded})  all
the components on the two sides are indecomposable acts. Since $A_i\ncong A_{i'}$ for any distinct elements $i, i'\in I,$ by applying
\cite[Theorem 1.5.10]{act}, we get for every $i\in I, f(A_i)=A_i$ or $f(A_i)=C_j$ for
some $j\in J.$
 Next Put
\begin{equation}
\begin{split}
I_1&=\{i\in I \mid \ f(A_i)=A_i\}, \\ I_2&=\{i\in I \mid \
f(A_i)=C_j \textrm{ for some } j\in J\},
\end{split}
\end{equation}
\begin{equation}
\begin{split}
J_1&=\{j\in J \mid\ C_j=f(A_i)\textrm{ for  some }i\in I\}, \\
J_2&=\{j\in J \mid \ C_j=f(B_k)\textrm{ for  some }k\in K\},
\end{split}
\end{equation}
 and
\begin{equation}
\begin{split}
K_1&=\{k\in K \mid \ f(B_k)=C_j\textrm{ for  some }j\in J\}, \\
K_2&=\{k\in K \mid \ f(B_k)=A_i\textrm{ for  some }i\in I\}.
\end{split}
\end{equation}
Then it is clear that
\begin{equation}\label{disjoint sets}
I_1\cap I_2=\emptyset, \quad  J_1\cap J_2=\emptyset  \textrm{ and }
K_1\cap K_2=\emptyset
\end{equation}
and
\begin{equation}\label{equipotent sets}
|I_2|=|J_1|,  \quad |K_1|=|J_2| \textrm{ and } |K_2|=|I_2|.
\end{equation}
We have
$\displaystyle\dot{\bigcup_{j\in
J_1}}C_{j}=\displaystyle\dot{\bigcup_{i\in I_2}}f(A_{i})$, because
for any $i\in I_2$ there exists a unique element $j\in J_1$ in such
a way that $f(A_i)=C_j$, and vice versa. Similarly
\begin{equation}
\dot{\bigcup_{j\in J_2}}C_{j}=\dot{\bigcup_{k\in K_1}}f(B_{k})
\textrm{ and } \dot{\bigcup_{i\in I_2}}A_{i}=\dot{\bigcup_{k\in
K_2}}f(B_{k}).
\end{equation}
Now, since $f$ is an isomorphism,  by (\ref{disjoint sets}) and
 (\ref{equipotent sets}) we
obtain
\[\begin{array}{ccl}
C&=&(\displaystyle\dot{\bigcup_{j\in J_1}}C_{j})\cup(\dot{\bigcup_{j\in J_2}}C_{j})\\
 &=& (\displaystyle\dot{\bigcup_{i\in
I_2}}f(A_{i}))\cup(\dot{\bigcup_{k\in K_1}}f(B_{k}))\\
&\cong&(\displaystyle\dot{\bigcup_{i\in
I_2}}A_{i})\cup(\dot{\bigcup_{k\in K_1}}f(B_{k}))\\
&=&(\displaystyle\dot{\bigcup_{k\in K_2}}f(B_{k}))\cup(\dot{\bigcup_{k\in K_1}}f(B_{k}))\\
 &=&f(B).
\end{array}\]
Therefore, $C\cong f(B)\cong B.$
\end{proof}

Let $A$ be an $S$-act and  $A=\displaystyle\dot{\bigcup_{i\in I}}A_i$ be the unique
decomposition of $A$ into indecomposable subacts. Define for $i, j\in
I$, $i\sim j$ if and only if $A_i\cong A_j$. Then $\sim$ is an
equivalence relation on $I$.  The equivalence class $i\in
I$ is given by $[i]=\{j\in I \mid  A_i\cong A_j\}$.

With this introduction we have
\begin{theorem}\label{main1}
Let $A$ be an $S$-act and let $A=\displaystyle\dot{\bigcup_{i\in I}}A_i$ be the unique
decomposition of $A$ into indecomposable subacts $A_i, i\in I$ such that the set
$P=\{{\rm Card} [i] \mid i\in I\}$ is finite.
Then $A$ is
cancellable if and only if the equivalence class $[i]$ is finite for every $i\in I$.
\end{theorem}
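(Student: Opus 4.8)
The plan is to treat the two implications separately, both times leaning on the uniqueness of the decomposition into indecomposables (\cite[Theorem 1.5.10]{act}) and on the fact that an isomorphism preserves indecomposability (\cite[Lemma 1.5.36]{act}).

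For the ``only if'' direction I would argue contrapositively. Suppose some class $[i_0]$ is infinite and write $A=A'\dot\cup D$ with $D=\dot\bigcup_{i\in[i_0]}A_i$ and $A'=\dot\bigcup_{i\in I\setminus[i_0]}A_i$ (with $A=D$ in case $I=[i_0]$). Every $A_i$, $i\in[i_0]$, is isomorphic to the indecomposable act $A_{i_0}$ and $[i_0]$ is infinite, so $D$ has exactly the form of the act in Example~\ref{ex}(ii) and is therefore not cancellable; by Theorem~\ref{t:c} the act $A=A'\dot\cup D$ is not cancellable either (and if $A=D$ this is immediate). Hence, if $A$ is cancellable, every $[i]$ must be finite.

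For the ``if'' direction, assume every $[i]$ is finite and suppose $A\dot\cup B\cong A\dot\cup C$; as usual we may take $A\cap B=A\cap C=\emptyset$ and fix an isomorphism $f\colon A\dot\cup B\to A\dot\cup C$. Let $B=\dot\bigcup_{k\in K}B_k$ and $C=\dot\bigcup_{m\in M}C_m$ be the unique decompositions into indecomposable subacts. Then the family consisting of the $A_i$ $(i\in I)$ and the $B_k$ $(k\in K)$ is a decomposition of $A\dot\cup B$ into pairwise disjoint indecomposable subacts, and similarly the $A_i$ $(i\in I)$ and the $C_m$ $(m\in M)$ form one for $A\dot\cup C$. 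Since $f$ sends each of these components onto an indecomposable subact of $A\dot\cup C$ isomorphic to it, and the images are again pairwise disjoint with union $A\dot\cup C$, uniqueness of the decomposition shows that the family $(f(A_i))_{i\in I},(f(B_k))_{k\in K}$ coincides, as a collection of subacts, with the family $(A_i)_{i\in I},(C_m)_{m\in M}$. In particular, for every indecomposable $S$-act $X$ the number of components isomorphic to $X$ agrees on the two sides: writing $\alpha_X,\beta_X,\gamma_X$ for the cardinalities of $\{i:A_i\cong X\}$, $\{k:B_k\cong X\}$, $\{m:C_m\cong X\}$, we obtain $\alpha_X+\beta_X=\alpha_X+\gamma_X$.

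The heart of the matter is now a cardinal cancellation. For each $X$ the cardinal $\alpha_X$ is finite: it is $0$ when $X$ is isomorphic to no $A_i$, and otherwise it equals ${\rm Card}[i]$ for any $i$ with $A_i\cong X$, which is finite by hypothesis (finiteness of $P$ yields the stronger fact that the $\alpha_X$ are uniformly bounded, but finiteness of each ${\rm Card}[i]$ is all that is used here). Cancelling the finite cardinal $\alpha_X$ from $\alpha_X+\beta_X=\alpha_X+\gamma_X$ gives $\beta_X=\gamma_X$ for every indecomposable $X$, and choosing for each $X$ a bijection between $\{k:B_k\cong X\}$ and $\{m:C_m\cong X\}$ and assembling these produces a bijection $K\to M$ under which corresponding components are isomorphic; hence $B=\dot\bigcup_{k\in K}B_k\cong\dot\bigcup_{m\in M}C_m=C$, as required. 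I expect the only genuine obstacle to be this cancellation step: it is precisely where finiteness of the equivalence classes enters, and it really fails for infinite $\alpha_X$ — the mechanism behind Example~\ref{ex}(ii). Everything else is routine bookkeeping with the unique indecomposable decomposition, together with the elementary observation that two $S$-acts having the same multiset of indecomposable components up to isomorphism are isomorphic.
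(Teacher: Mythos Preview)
Your proof is correct, and both directions are sound: the necessity argument is essentially the paper's, and the sufficiency argument via counting isomorphism types and cancelling the finite cardinal $\alpha_X$ from $\alpha_X+\beta_X=\alpha_X+\gamma_X$ is valid (a finite cardinal is additively cancellable even against infinite cardinals).

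However, your route for sufficiency is genuinely different from the paper's. The paper does not argue directly with an isomorphism $f$; instead it exploits the hypothesis that $P=\{{\rm Card}[i]:i\in I\}$ is finite to write $A$ as a \emph{finite} disjoint union of subacts $C_{m_k}^l$ (one choice of representative from each class of size $m_k$, repeated $m_k$ times, for each of the finitely many values $m_k\in P$), where each $C_{m_k}^l$ has pairwise non-isomorphic indecomposable components. Then Theorem~\ref{inf} makes each $C_{m_k}^l$ cancellable, and Theorem~\ref{t:c} (applied finitely often) makes $A$ cancellable. In the paper's argument the finiteness of $P$ is essential: it guarantees that only finitely many pieces $C_{m_k}^l$ occur, so that Theorem~\ref{t:c} applies.

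Your argument, by contrast, never uses the finiteness of $P$; as you yourself observe, only the finiteness of each individual ${\rm Card}[i]$ is needed to cancel $\alpha_X$. Thus you have actually proved the stronger, hypothesis-free statement that $A$ is cancellable if and only if every class $[i]$ is finite, with no assumption on $P$. The trade-off is that the paper's proof is a clean reduction to previously established results (Theorems~\ref{t:c} and~\ref{inf}), whereas yours is a self-contained direct argument that bypasses Theorem~\ref{inf} entirely but yields a sharper theorem.
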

\begin{proof} Let $P=\{\text{Card}[i] \mid i\in I\}$ be finite.\\
{\it Necessity.} If for some $i\in I,[i]$ is an infinite set then
$\cup_{j\in [i]}A_j$ is not cancellable (see Example~\ref{ex}).
Therefore by Theorem~\ref{t:c}, $A$ is not cancellable which is a contradiction.\\
{\it Sufficiency.}
 Assume that the equivalence class $[i]$ is finite for every $i\in I$.
 Let $m_1, \cdots ,m_n$ are distinct elements of $P,$ where $m_1$ and $m_n$
 are the smallest and the greatest elements of $P$ respectively.
We define for $1\leq k\leq n$:
\begin{equation}\label{ I_{m_k}}
   I_{m_k}=\{i\in I \mid \text{card} [i]=m_k\}
\end{equation}
  In fact we realize that $I_{m_k}$ is the union of classes that each has $m_k$ elements.  Note that it is possible that $I_{m_k}$ to be infinite. Now we define subsets $I_{m_k}^1, I_{m_k}^2, \cdots ,I_{m_k}^{m_k}$ of $I_{m_k}$  recursively as follows:\\
  Put  $I_{m_k}^1$ to be the subset  $I_{m_k}$ which consists of elements that we choose from  each classes one element. In a similar vein, suppose that we have defined subsets $I_{m_k}^2, \cdots , I_{m_k}^{t-1}$. Then define $I_{m_k}^{t}$ to be the set of elements of $I_{m_k}$ that are not in the earlier  subsets $I_{m_k}^1, I_{m_k}^2, \cdots ,I_{m_k}^{t-1}$. Summarizing  for $1\leq l\leq m_k$ we have,
\begin{equation}\label{}
I_{m_k}^l=\{i\in I_{m_k} \mid i\notin I_{m_k}^{l_1}\textrm{ for
any } l_1<l\}\textrm{ and }[i]\neq [i'],\quad  \forall i\ne i'\in I_{m_k}^l.
\end{equation}
Set
\begin{equation}\label{}
C_{m_k}^{l}=\bigcup_{i\in I_{m_k}^{l}}A_i.
\end{equation}
Then
\begin{equation}\label{}
A=\bigcup_{k=1}^{n}\bigcup_{l=1}^{m_k}C_{m_k}^{l}
\end{equation}
 for $1\leq k\leq n$ and
$1\leq l\leq m_k$. By Theorem~\ref{inf}, $C_{m_k}^{l}$ is cancellable
for every $1\leq k\leq n$ and $1\leq l\leq m_k$, because
for every distinct pair of elements $i,j\in I_{m_k}^{l}$ we have $A_i\ncong A_j$.
Therefore $A$
is cancellable by Theorem~\ref{t:c}.
\end{proof}

\begin{theorem}\label{eq}
Let $A=\dot\bigcup_{i\in I}A_i$ be  the unique
decomposition of $A$ into indecomposable subacts  $A_i, i\in I.$
Furthermore, assume that the set of equivalence classes of $I$, $I/{\sim},$ is finite.
Then  $A$ is cancellable if and only if $A$ is finitely decomposable.

\begin{proof}
Suppose that $A$ is cancellable $S$-act.
If in contrary $A$  is infinitely
decomposable, then $I$ is infinite. Since $|I/{\sim}|<\infty$
there exists an infinite subset $J\subseteq I$ such that $A_i\cong A_j$
for any $i,j\in J.$ Since $\cup_{j\in J}A_j$ is not cancellable (see
Example~\ref{ex}) therefore by Theorem~\ref{t:c}, $A$ is not cancellable, a contradiction.
Therefore  $A$ is finitely decomposable. Then converse is true
by Proposition~\ref{fi}.\\
\end{proof}
\end{theorem}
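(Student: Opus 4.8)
The plan is to obtain the statement directly from the machinery already assembled, chiefly Theorem~\ref{t:c}, Proposition~\ref{fi}(4) and Example~\ref{ex}(ii). In fact one could even route it through Theorem~\ref{main1}: assuming $I/{\sim}$ finite forces $P=\{{\rm Card}\,[i]\mid i\in I\}$ to be finite, since $P$ is the image of the finite set $I/{\sim}$ under $[i]\mapsto{\rm Card}\,[i]$; Theorem~\ref{main1} then gives ``$A$ cancellable $\iff$ every $[i]$ finite'', and ``$I/{\sim}$ finite together with every class finite'' is just ``$I$ finite'', i.e.\ ``$A$ finitely decomposable''. But a self-contained argument is equally short, and that is what I would write out.

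For the ``if'' direction there is nothing to do: if $A$ is finitely decomposable, then $A$ is cancellable by Proposition~\ref{fi}(4).

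For the ``only if'' direction I would argue by contraposition. Suppose $A$ is infinitely decomposable, so $I$ is infinite. Since $I$ is the union of the finitely many equivalence classes in $I/{\sim}$, at least one class $J$ must be infinite. All components $A_j$ with $j\in J$ are isomorphic to a single indecomposable $S$-act, so $\dot\bigcup_{j\in J}A_j$ is isomorphic to a coproduct of infinitely many copies of an indecomposable act, hence not cancellable by Example~\ref{ex}(ii). If $J=I$ we are already done. Otherwise $A=\bigl(\dot\bigcup_{j\in J}A_j\bigr)\dot\cup\bigl(\dot\bigcup_{i\in I\setminus J}A_i\bigr)$ exhibits $A$ as a coproduct of two $S$-acts one of which is not cancellable, so $A$ is not cancellable by Theorem~\ref{t:c}. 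In every case $A$ fails to be cancellable, which is the contrapositive of what we want.

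I do not anticipate a genuine obstacle here; the proof is essentially a repackaging of earlier results. The only points that need a little care are the pigeonhole step (a finite union covering an infinite set must have an infinite member) and isolating the degenerate case $I=J$, so that whenever Theorem~\ref{t:c} is invoked both coproduct summands are genuine nonempty $S$-acts.
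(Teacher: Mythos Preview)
Your argument is correct and mirrors the paper's proof almost exactly: both directions invoke Proposition~\ref{fi}(4) for the easy half and, for the other half, use pigeonhole to locate an infinite equivalence class $J$, cite Example~\ref{ex}(ii) to see that $\dot\bigcup_{j\in J}A_j$ is not cancellable, and then appeal to Theorem~\ref{t:c}. Your explicit treatment of the degenerate case $J=I$ and the alternative derivation via Theorem~\ref{main1} are nice touches, but they do not change the substance of the approach.
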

\begin{remark}
In corollary~\ref{fr} we have seen that a free $S$-act is
cancellable if and only if it is  finitely decomposable. This is
easy by Theorem \ref{eq}, because for each free $S$-act $A$, we have
$A\cong\dot{\cup}_{i\in I}S_i$ which $S_i=S$ for any $i\in I$,
by~\cite[Theorem 1.5.13]{act}, therefore $|I/{\sim}| =1$.
\end{remark}
Let $E(S)$ be the set of all idempotents
of $S.$ By~\cite[Theorem 3.17.8]{act}, an $S$-act $P$ is
projective if and only if $P=\dot{\cup}_{i\in I}P_i$ where $P_i\cong
e_iS$ for idempotents $e_i\in S, i\in I.$  We define an  equivalence relation on
$E(S)$, $e\sim f$ if and only if $eS\cong fS$ which $e,f\in E(S)$.
Let $E(S)/{\sim}=\{[e]; e\in E(S)\}$ where $[e]=\{f\in E(S); fS\cong
eS\}$.
\begin{corollary}\label{pro} Let $S$ be a
monoid in which $|E(S)/{\sim}| <\infty$ and let $P$ be a projective $S$-act. Then
 $P$ is cancellable if
and only if $P$ is finitely decomposable.

\begin{proof}
Since every projective $S$-act is of the form $P\cong\dot{\cup}_{e\in
E(S)}eS,$  the result is clear by Theorem~\ref{eq}.
\end{proof}
\end{corollary}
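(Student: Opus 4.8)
The plan is to deduce this from Theorem~\ref{eq} by showing that a projective $S$-act, written in its canonical form, has only finitely many isomorphism types among its indecomposable components. By \cite[Theorem 3.17.8]{act} we may write $P\cong\dot{\bigcup}_{i\in I}P_i$ with $P_i\cong e_iS$ for idempotents $e_i\in S$, $i\in I$. First I would observe that each $e_iS$ is a cyclic $S$-act, being generated by the element $e_i$, hence indecomposable by \cite[Proposition 1.5.8]{act}; consequently $P\cong\dot{\bigcup}_{i\in I}P_i$ \emph{is} the unique decomposition of $P$ into indecomposable subacts guaranteed by \cite[Theorem 1.5.10]{act}, so that Theorem~\ref{eq} becomes applicable to it once its hypothesis is verified.

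Next I would control the number of equivalence classes $|I/{\sim}|$, where $i\sim j$ means $P_i\cong P_j$. For $i,j\in I$ one has $P_i\cong P_j$ if and only if $e_iS\cong e_jS$, i.e. if and only if $e_i\sim e_j$ in $E(S)$ in the sense of the paragraph preceding the statement. Hence the assignment $[i]\mapsto[e_i]$ is well defined and injective as a map $I/{\sim}\longrightarrow E(S)/{\sim}$, which gives $|I/{\sim}|\le|E(S)/{\sim}|<\infty$.

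With this in hand, Theorem~\ref{eq} applied to $A=P$ together with its indecomposable decomposition $\dot{\bigcup}_{i\in I}P_i$ yields at once that $P$ is cancellable if and only if $P$ is finitely decomposable, which is the claim. The only point that really requires care is the bookkeeping in the second step --- that the isomorphism classes of the components $e_iS$ are indexed, up to the relation $\sim$, by a subset of $E(S)/{\sim}$ --- since everything else is a direct invocation of results already established. (Strictly, one should also remark that $I$ can be chosen so that the $P_i$ are exactly the indecomposable summands of $P$, but this is immediate from their cyclicity.)
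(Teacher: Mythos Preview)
Your proposal is correct and follows essentially the same route as the paper: invoke the structure theorem for projective $S$-acts (so that the indecomposable components are all of the form $eS$ with $e\in E(S)$), observe that this forces $|I/{\sim}|\le |E(S)/{\sim}|<\infty$, and then apply Theorem~\ref{eq}. The paper's proof is a one-line compression of exactly this argument; your version merely spells out the details (cyclicity of $eS$, the injection $I/{\sim}\hookrightarrow E(S)/{\sim}$) that the paper leaves implicit.
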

Here, we introduce the concept of  internal cancellation in \textbf{Act}-$S$. As we have mentiond in the abstract, We shall show
that this coincides with  cancellation.
\begin{definition}
An $S$-act $A$ satisfies internal cancellation if, for any subacts $C, D, E$
and $F$  of $A$, $A=C\dot{\cup} D=E\dot{\cup} F$
and $C\cong E$ implies that $D\cong F$. If $A$ satisfies internal
cancellation we call $A$ is internally cancellable.
\end{definition}
There exist examples that internal cancellation in the category $\textbf{Act}-S$
always does not satisfy. Let us to provide an example.

\begin{example}
Let $S$ be a monoid.
As $S$ with its operation is an $S$-act then all $S$-acts
$$C=S\times
\{1\}, D=\bigcup_{i\in\mathbb{N}}(S\times{\{i+1\}}), E=S\times
\{1\}\cup S\times \{2\},
F=\bigcup_{i\in\mathbb{N}}(S\times{\{i+2\}})$$
 with actions induced by the action of $S$ are $S$-acts.
 Furthermore, we have $C\cup D=E\cup F$ and $D\cong F$, but $C\ncong E$.
 It is means that, the $S$-act $A=\displaystyle\bigcup_{i\in\mathbb{N}}(S\times\{i\})$
 is not internally cancellable.
\end{example}

\begin{theorem}\label{ic}
Let $A$ be an $S$-act. Then $A$ is cancellable if and only if $A$ is
internally cancellable.
\end{theorem}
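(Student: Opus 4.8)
The plan is to deduce both implications from Theorem~\ref{t:c}, using repeatedly that an isomorphism $X\cong Y$ of $S$-acts yields an isomorphism $X\dot\cup W\cong Y\dot\cup W$ for every $S$-act $W$. The forward implication (cancellable $\Rightarrow$ internally cancellable) will be handled directly; the converse by contraposition.

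Assume first that $A$ is cancellable, and let $C,D,E,F$ be subacts of $A$ with $A=C\dot\cup D=E\dot\cup F$ and $C\cong E$. From $C\cong E$ we get $C\dot\cup D\cong E\dot\cup D$, so
\[
E\dot\cup D\ \cong\ C\dot\cup D\ =\ A\ =\ E\dot\cup F.
\]
Since $A=E\dot\cup F$ is cancellable, Theorem~\ref{t:c} shows that $E$ is cancellable; cancelling $E$ in $E\dot\cup D\cong E\dot\cup F$ yields $D\cong F$. Hence $A$ is internally cancellable.

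For the converse, suppose $A$ is \emph{not} cancellable; I will produce subacts of $A$ witnessing a failure of internal cancellation. Write $A=\dot\bigcup_{i\in I}A_i$ for the unique decomposition into indecomposables. The crucial claim is that some indecomposable $S$-act $Z$ occurs infinitely often among the $A_i$, i.e.\ $J:=\{\,i\in I\mid A_i\cong Z\,\}$ is infinite. If not, every indecomposable occurs with finite multiplicity in $A$; then for any $S$-acts $B,C$ with $A\dot\cup B\cong A\dot\cup C$, decompose $B$ and $C$ into indecomposables, and note that $\dot\bigcup_{i\in I}A_i$ together with the summands of $B$ (respectively of $C$) is a decomposition of $A\dot\cup B$ (respectively of $A\dot\cup C$) into indecomposables, hence by \cite[Theorem~1.5.10]{act} the unique one. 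By \cite[Lemma~1.5.36]{act} the given isomorphism carries indecomposable summands to indecomposable summands, so by uniqueness it matches the summands of these two decompositions bijectively, preserving isomorphism type. Comparing multiplicities of each indecomposable iso-class, the multiplicity in $A$ plus that in $B$ equals the multiplicity in $A$ plus that in $C$; since the $A$-multiplicities are finite they cancel, so $B$ and $C$ have the same indecomposable multiplicities and therefore $B\cong C$, contradicting non-cancellability. This proves the claim.

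Now fix such a $Z$ and $J$, pick distinct $j_1,j_2\in J$, and put $A''=\dot\bigcup_{i\in I\setminus J}A_i$ (if $I=J$, omit $A''$ in what follows). Let $C=\dot\bigcup_{i\in J\setminus\{j_1\}}A_i$, $\ E=\dot\bigcup_{i\in J\setminus\{j_1,j_2\}}A_i$, $\ D=A_{j_1}\dot\cup A''$ and $F=A_{j_1}\dot\cup A_{j_2}\dot\cup A''$; these are subacts of $A$ and $A=C\dot\cup D=E\dot\cup F$. Since $J$ is infinite, $|J\setminus\{j_1\}|=|J|=|J\setminus\{j_1,j_2\}|$, so $C$ and $E$ are each a coproduct of $|J|$ copies of $Z$, whence $C\cong E$. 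On the other hand $A_i\not\cong Z$ for $i\in I\setminus J$, so $Z$ has multiplicity $1$ in $D$ and multiplicity $2$ in $F$; by uniqueness of decomposition $D\not\cong F$. Thus $A$ is not internally cancellable, giving the desired contrapositive. The only substantive step is the claim above --- that non-cancellability forces an infinite multiplicity; once that is available, the rest is routine bookkeeping with disjoint unions and cardinalities.
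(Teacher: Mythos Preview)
Your forward implication matches the paper's. Your converse, however, is genuinely different. The paper proves it \emph{directly}: given an isomorphism $f:A\cup B\to A\cup C$, it decomposes $A$ in two ways as
\[
A=f^{-1}(A\cap f(A))\,\dot\cup\,f^{-1}(f(A)\cap C)=(f(A)\cap A)\,\dot\cup\,(f(B)\cap A),
\]
applies internal cancellation of $A$ to get $A\cap f^{-1}(C)\cong f(B)\cap A$, and then reassembles $f^{-1}(C)$ and $f(B)$ from matching pieces to conclude $B\cong C$. This is short, uses only elementary set manipulations, and makes no appeal to the indecomposable decomposition. Your route is structural and by contraposition: you show that non-cancellability forces some indecomposable to occur with infinite multiplicity (via a cardinal-multiplicity count, using that a finite cardinal can always be cancelled from a sum), and then exhibit an explicit failure of internal cancellation from that infinite block. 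This is correct, and in fact your intermediate claim --- that $A$ is cancellable iff every indecomposable isomorphism class has finite multiplicity in $A$ --- is strictly stronger than the paper's Theorem~\ref{main1}, since you need no hypothesis on the set $P$. So the paper's argument is more self-contained and actually \emph{uses} the internal cancellation hypothesis, while yours trades directness for a sharper structural characterisation of cancellability along the way.
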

\begin{proof}
{\it Necessity.} Assume  $A=C\dot{\cup} D=E\dot{\cup} F$ in which
$C, D, E, F$ are subacts of $A$ and $C\cong E$. Then  $C\dot{\cup}
D\cong C\dot{\cup} F$. By Theorem~\ref{t:c}, $C$ is cancellable and
then $D\cong F.$ Therefore $A$ is internally
cancellable.\\
{\it Sufficiency.} Suppose $A$ is an internally cancellable $S$-act
and $A\dot{\cup} B\cong A\dot{\cup} C$ in which
$B,C\in\textbf{Act}-S.$ We may assume without loss of generality
that  $A\cap B=A\cap C=\emptyset$. Let $f:A\cup B\longrightarrow
A\cup C$ be an $S$-isomorphism. Since $f(A)\cup f(B)=A\cup C$,
intersect this equation once with $f(A)$ and once more with $A$ we
get
\begin{equation}\label{f(A)}
f(A)=(A\cap f(A))\cup(f(A)\cap C)
\end{equation}
 and
 \begin{equation}\label{A}
 A=(f(A)\cap A)\cup(f(B)\cap A).
 \end{equation}
 Combine together equations (\ref{f(A)}) and (\ref{A}) gives us
 \begin{equation}
  A=f^{-1}(A\cap
f(A))\cup f^{-1}(f(A)\cap C)=(f(A)\cap A)\cup(f(B)\cap A),
\end{equation}
 Since
 \begin{equation}
 f^{-1}(f(A)\cap A)\cong f(A)\cap A
 \end{equation}
   and $A$ is internally
cancellable we deduce that
 \begin{equation}
 f^{-1}(f(A)\cap C)\cong f(B)\cap A.
\end{equation} i.e.,
\begin{equation}\label{f^{-1}(C)}
 A\cap f^{-1}(C)\cong f(B)\cap A.
\end{equation}
Since $f^{-1}$ is an isomorphism we get
\begin{equation}
 f^{-1}(f(B)\cap C)\cong f(B)\cap C
\end{equation}
i.e.,
\begin{equation}\label{f(B)}
 B\cap f^{-1}(C)\cong f(B)\cap C.
\end{equation}In a similar way, as we did in~(\ref{f(A)}) we have
\begin{equation}\label{f^{-1}(C) expanded}
 f^{-1}(C)=(f^{-1}(C)\cap A)\cup(f^{-1}(C)\cap B)
\end{equation}
 and
\begin{equation}\label{f(B) expabded}
 f(B)=(f(B)\cap A)\cup (f(B)\cap C).
\end{equation}
As $f$ and $f^{-1}$ are isomorphism we have $B\cong f(B), C\cong
f^{-1}(C)$. Now by (\ref{f^{-1}(C)}), (\ref{f(B)}),  (\ref{f^{-1}(C)
expanded}) and (\ref{f(B) expabded})  we deduce $f^{-1}(C)\cong
f(B)$ and so $B\cong C.$
Therefore $A$ is cancellable.
\end{proof}

\section*{Acknowledgment(s)}
The authors would like to thank Semnan university for its
financial support.


\end{document}